\newcommand{\bb}[1]{\mathbb{#1}}
\newtheorem{lem}{Lemma}
\newtheorem{thm}{Theorem}
\newtheorem{conj}{Conjecture}
\newtheorem{ques}{Question}
\title{An asymptotic result concerning a question of Wilf} 
\author{Alex Zhai} 
\date{}
\begin{document}
\maketitle

\begin{abstract}
  Let $\Lambda$ be a numerical semigroup, and define $c(\Lambda) =
  \max(\bb{N} \setminus \Lambda) + 1$ and $c'(\Lambda) = \left| \{
    \lambda \in \Lambda \mid \lambda < c(\Lambda) \} \right|$. It was
  asked by Wilf whether it is always the case that

  \[ \frac{c'(\Lambda)}{c(\Lambda)} \ge \frac{1}{e(\Lambda)}, \]

  \noindent where $e(\Lambda)$ is the embedding dimension of
  $\Lambda$. We prove an asymptotic and approximate version of this
  result; in particular, we show that for a fixed positive integer $k$
  and any $\epsilon > 0$,

  \[ \frac{c'(\Lambda)}{c(\Lambda)} > \frac{1}{k} - \epsilon \]

  \noindent for all but finitely many numerical semigroups $\Lambda$
  with $e(\Lambda) = k$. To this end, we also give an explicit lower
  bound for $\frac{c'(\Lambda)}{c(\Lambda)}$ in terms of the
  multiplicity of $\Lambda$.
\end{abstract}

\section{Introduction}

A \emph{numerical semigroup} is defined to be a cofinite subsemigroup
of the non-negative integers. It is not difficult to see that any
numerical semigroup has a well-defined minimal set of generators (see,
for example, Theorem 2.7 of \cite{RosalesSanchez}). The size of this
minimal generating set is known as the \emph{embedding dimension},
which we will denote by $e(\Lambda)$.

We also define $c(\Lambda) = \max(\bb{N} \setminus \Lambda) + 1$,
known as the \emph{conductor} of $\Lambda$, and $c'(\Lambda) = \left|
  \{ \lambda \in \Lambda \mid \lambda < c(\Lambda) \} \right|$. These
quantities are closely related to the better-known quantities
$g(\Lambda) = c(\Lambda) - c'(\Lambda) + 1$, known as the
\emph{genus}, and $F(\Lambda) = c(\Lambda) - 1$, known as the
\emph{Frobenius number}. However, neither the genus nor the Frobenius
number will play a significant role in our subsequent investigations,
so we will express things in terms of $c(\Lambda)$ and $c'(\Lambda)$.

It was asked by Wilf in \cite{Wilf} whether
$\frac{c'(\Lambda)}{c(\Lambda)} \ge \frac{1}{e(\Lambda)}$ always
holds.\footnote{The original question puts the inequality in a
  slightly different but equivalent form.} In words, this says that of
the numbers less than the conductor of $\Lambda$, the proportion in
$\Lambda$ is at least the reciprocal of the embedding
dimension. Although the statement was not formulated as a conjecture
in Wilf's original paper, several authors have expressed belief that
the statement is true (\cite{BrasAmoros}, \cite{DobbsMatthews},
\cite{Kaplan}). Thus, we will refer to it as the Wilf conjecture.

\begin{conj}[Wilf]
Let $\Lambda$ be a numerical semigroup. Then,

\[ \frac{c'(\Lambda)}{c(\Lambda)} \ge \frac{1}{e(\Lambda)}. \]
\end{conj}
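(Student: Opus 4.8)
The plan is to recast Wilf's inequality in additive form and then attack it by stratifying according to the multiplicity. Writing $g$ for the number of gaps, the integers $\{0,1,\dots,c-1\}$ split into the $c'$ elements of $\Lambda$ below the conductor and the $g$ gaps, so $c = c' + g$. Hence the conjecture $\frac{c'}{c}\ge\frac{1}{e}$ is equivalent to $e\,c'\ge c$, i.e. to
\[
(e-1)\,c' \;\ge\; g .
\]
I would first note the one bound that comes for free: if $q=\lceil c/m\rceil$, then the $q$ multiples $0,m,\dots,(q-1)m$ all lie in $\Lambda$ and below $c$, so $c'\ge\lceil c/m\rceil$ and therefore $\frac{c'}{c}\ge\frac{1}{m}$. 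Since the minimal generators occupy distinct residues modulo $m$ we have $e\le m$, so this easy bound is strictly weaker than what is wanted: the whole difficulty lives in the regime $e<m$, where the reciprocal must be improved from $\frac{1}{m}$ to $\frac{1}{e}$ by exploiting the sparseness of the generating set.

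To exploit that sparseness I would partition the integers into blocks $B_j = [jm,(j+1)m)$ of length $m$. Then $B_0\cap\Lambda=\{0\}$, the conductor lies in $B_{q-1}$, and everything from $B_q$ onward lies in $\Lambda$. Writing $\ell_j=\lvert\Lambda\cap B_j\rvert$ and letting $p_j$ count the minimal generators in $B_j$, we have $c'=\sum_{j=0}^{q-1}\ell_j$ and $e=\sum_{j\ge1}p_j$, and the target becomes a blockwise comparison. The two semigroup operations respecting this stratification are translation by $m$, which embeds $\Lambda\cap B_j$ injectively into $\Lambda\cap B_{j+1}$ (so the sequence $(\ell_j)$ is nondecreasing until it saturates at $m$), and addition of the $e-1$ non-multiplicity generators, which pushes low-block elements into higher blocks. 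The hope is a telescoping inequality of the form $e\,\ell_j\ge(\text{contribution of }B_j\text{ to }c)$ whose sum is exactly $(e-1)c'\ge g$, with the growth of $(\ell_j)$ constrained in terms of the generator counts $p_j$ by Macaulay-type (Hilbert-function) bounds.

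For small depth this can be pushed through. When $q\le2$ the window is short enough that a direct matching works: one sends each gap $h$ injectively to a distinct element of $L=\Lambda\cap[0,c)$ by translating along the $e-1$ non-multiplicity generators and checks that $e-1$ copies of $L$ have room for all $g$ gaps; the case $q=3$ follows by combining this matching with the growth constraint on $(\ell_j)$. One would then try to induct on the depth $q$, passing to the truncated semigroup obtained by discarding the top block and relating its invariants to the original ones.

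The hard part—and the reason the full conjecture is still out of reach—is controlling the cross-block additive combinatorics for general $q$. Truncation does not respect the embedding dimension: deleting the top block can destroy or create minimal generators, so the inductive inequality degrades. Likewise, the per-block estimate $e\,\ell_j\ge\cdots$ is simply false block by block and can hold only after summation, because an element below the conductor is typically representable by the generators in many overlapping ways, rendering the translation maps neither injective nor surjective in any controllable fashion once $q\ge4$. The sumset inequalities that would quantify these overlaps—Cauchy–Davenport/Kneser on the additive side, Kruskal–Katona/Macaulay on the order-ideal side—are exactly strong enough for bounded depth and exactly too weak in general. I expect that is where any honest attempt stalls, which is consistent with this paper settling only an asymptotic and approximate form of the inequality rather than the conjecture itself.
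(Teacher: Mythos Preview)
The statement you were asked to prove is labeled \emph{Conjecture~1} in the paper and is not proved there; the paper establishes only the weaker Theorem~\ref{main theorem} and the asymptotic Theorem~\ref{asymptotic consequence}. You seem aware of this, since your proposal ends by conceding that the argument stalls once the depth $q=\lceil c/m\rceil$ exceeds $3$ and noting that this is consistent with the paper proving only an approximate result. So what you have submitted is not a proof of the conjecture but an outline of a known partial strategy together with an honest assessment of where it breaks.

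On the substance: your block decomposition $B_j=[jm,(j+1)m)$ and the monotone sequence $\ell_j=|\Lambda\cap B_j|$ is a natural line of attack, and the small-depth cases you sketch (essentially $2m\ge c$ and $3m\ge c$) are indeed tractable this way; the $2m\ge c$ case is exactly the result of Kaplan cited in the introduction. But your diagnosis of the obstruction is correct: truncating the top block does not preserve the embedding dimension, and there is no per-block inequality that can simply be summed. These are not gaps you can patch with the tools you list (Cauchy--Davenport, Kruskal--Katona); the full conjecture remains open.

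For comparison, the paper's route to its weaker Theorem~\ref{main theorem} is quite different from yours. Rather than slicing $[0,c)$ into blocks of length $m$, it works with the Ap\'ery set $A(\Lambda)$, expresses each Ap\'ery element as a lexicographically minimal lattice point in $O^{k-1}$ under the map $\pi(\mathbf{x})=\sum x_ig_{i+1}$, and then applies the purely combinatorial Lemma~\ref{inequality} about downward-closed finite subsets of $O^d$. That lemma gives $(d+1)\sum_{\mathbf{s}\in S}(C-\pi(\mathbf{s}))\ge C|S|$, which after translation through Lemma~\ref{lem: c' formula} yields $c'\ge c/e - (m-1)(e-2)/(2e)$. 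This is an inequality valid for \emph{all} $\Lambda$, with an explicit error term in $m$, whereas your approach gives the exact Wilf bound but only for bounded depth.
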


The above inequality has at least two families of equality cases. The
first is whenever $e(\Lambda) = 2$, which is addressed in
\cite{DobbsMatthews} (although not specifically mentioned as an
equality case). The second occurs when the minimal generators of
$\Lambda$ are $\{ k, nk + 1, nk + 2, \ldots , nk + k - 1 \}$ for some
positive integers $k$ and $n$, as mentioned in
\cite{FrobergGottliebHaggkvist}. In this case $e(\Lambda) = k$,
$c'(\Lambda) = n$, and $c(\Lambda) = nk$.

The first significant effort towards proving the Wilf conjecture was
made by Dobbs and Matthews in \cite{DobbsMatthews}. They prove the
inequality in the cases $e(\Lambda) \le 3$, $c(\Lambda) \le 21$,
$c'(\Lambda) \le 4$, and $\frac{c(\Lambda)}{4} \le c'(\Lambda)$. More
recently, Kaplan also proved the inequality when there are certain
restrictions relating to the \emph{multiplicity} of $\Lambda$, which
is defined to be $\min(\Lambda \setminus \{ 0 \})$ and is denoted by
$m$ or $m(\Lambda)$. In particular, he showed in \cite{Kaplan} that
the inequality holds if $3m(\Lambda) > 2(c(\Lambda) - c'(\Lambda))$ or
$2m \ge c(\Lambda)$.\footnote{The original paper formulated these
  inequalities in terms of the genus and the Frobenius number, but in
  order to use consistent notation, we have made the appropriate
  substitutions with $c(\Lambda)$ and $c'(\Lambda)$.} In addition, the
Wilf conjecture has been numerically verified by Bras-Amor\'os in
\cite{BrasAmoros} for a large number of numerical semigroups.

However, the author is not aware of any previous work that addresses
the Wilf conjecture for a general numerical semigroup. One of the
approaches in \cite{DobbsMatthews} considers the \emph{type} of a
numerical semigroup $\Lambda$, defined to be the number of integers $x
\not \in \Lambda$ such that $x + \lambda \in \Lambda$ for any non-zero
$\lambda \in \Lambda$. Denoting the type by $t(\Lambda)$, it is known
that $\frac{c'(\Lambda)}{c(\Lambda)} \ge \frac{1}{t(\Lambda) + 1}$
(see \cite{RosalesSanchez}, Proposition 2.26). Thus, to prove the Wilf
conjecture, it would suffice to show that $t(\Lambda) + 1 \le
e(\Lambda)$. Unfortunately, this approach fails even for $e(\Lambda) =
4$ (see \cite{DobbsMatthews}, Remark 2.13).

There are two main results in this paper. The first result establishes
a weakened version of the Wilf conjecture in terms of the multiplicity
$m(\Lambda)$.

\begin{thm} \label{main theorem} 

  For any numerical semigroup $\Lambda$,

  \[ c'(\Lambda) \ge \frac{c(\Lambda)}{e(\Lambda)} - \frac{(m(\Lambda)
    - 1)(e(\Lambda) - 2)}{2 e(\Lambda)}. \]

\end{thm}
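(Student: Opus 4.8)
The plan is to pass to the Apéry set and reduce the statement to a single averaging inequality about its elements. Write $m = m(\Lambda)$ and $e = e(\Lambda)$, and let $\mathrm{Ap} = \{w_0, w_1, \dots, w_{m-1}\}$ be the Apéry set of $\Lambda$ with respect to $m$, where $w_r$ is the smallest element of $\Lambda$ congruent to $r \pmod m$; thus $w_0 = 0$ and each residue class contributes exactly one element. Counting gaps class by class gives Selmer's formula, so the number of elements of $\mathbb{N}\setminus\Lambda$ lying below $c = c(\Lambda)$ is $c - c' = \frac{1}{m}\sum_r w_r - \frac{m-1}{2}$, while the largest Apéry element satisfies $M := \max_r w_r = c + m - 1$. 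Substituting these two identities, the desired bound is equivalent to
\[ \frac{1}{m}\sum_{r=0}^{m-1} w_r \;\le\; \frac{e-1}{e}\,(c + m - 1) \;=\; \frac{e-1}{e}\,M. \]
In words, the average of the Apéry elements should be at most an $\frac{e-1}{e}$ fraction of the way up to the maximal one.

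First I would record the structural input that drives everything: the Apéry set is closed under taking summands, that is, if $w \in \mathrm{Ap}$ and $w = x + y$ with $x, y \in \Lambda$, then $x, y \in \mathrm{Ap}$. This is immediate from the characterization $w \in \mathrm{Ap} \iff w - m \notin \Lambda$, since $x - m \in \Lambda$ would force $w - m = (x-m) + y \in \Lambda$, a contradiction. Two consequences follow. A minimal generator $a \ne m$ cannot have $a - m \in \Lambda$, so every generator other than $m$ lies in $\mathrm{Ap}$, and in fact every nonzero Apéry element is a sum of the $e - 1$ generators $b_1, \dots, b_{e-1}$ different from $m$ (the generator $m$ never appears, again because $w - m \notin \Lambda$). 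Moreover, along any factorization $w = b_{j_1} + \dots + b_{j_t}$ of an Apéry element, every partial sum lies again in $\mathrm{Ap}$. Thus $\mathrm{Ap}$ behaves like a finite downward-closed ``staircase'' assembled from the $e-1$ generators.

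The heart of the argument is then a purely combinatorial inequality capturing this staircase shape. Modeling each Apéry element by a factorization $\alpha \in \mathbb{N}^{e-1}$ with value $\pi(\alpha) = \sum_j \alpha_j b_j$, summand-closure makes the set of chosen factorizations a downward-closed order ideal $I \subseteq \mathbb{N}^{e-1}$ with $|I| = m$ and $\max_{\alpha \in I}\pi(\alpha) = M$. I would then prove: for any finite order ideal $I$ in $\mathbb{N}^d$ and any nonnegative additive weight $\pi(\alpha) = \sum_j \alpha_j b_j$,
\[ \sum_{\alpha \in I}\pi(\alpha) \;\le\; \frac{d}{d+1}\,|I|\,\max_{\alpha\in I}\pi(\alpha). \]
Applying this with $d = e - 1$ yields exactly the reduced inequality. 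This ideal bound is the discrete analogue of the fact that the centroid of a downward-closed region lies at most a $\frac{d}{d+1}$ fraction of the way to its top, with equality for the simplex $\{\alpha \ge 0 : \sum_j \alpha_j \le n\}$ under equal weights --- which, reassuringly, is precisely the configuration one gets for the equality case $e = 2$, and more generally pinpoints where the slack in Theorem~\ref{main theorem} comes from. I would prove it by induction on $d$, slicing $I$ along the last coordinate into nested ideals $I_0 \supseteq I_1 \supseteq \cdots$ and combining the $(d-1)$-dimensional bounds for the slices with the one-dimensional estimate in the last coordinate.

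I expect two places to demand the most care. The genuine obstacle is the combinatorial heart: keeping the slicing induction honest, since the maximal weight of a slice need not equal the global maximum $M$, so the bookkeeping relating each slice's local maximum to $M$ must be arranged so that the weighted averages still combine to the clean factor $\frac{d}{d+1}$. The second, more technical, difficulty is the faithful modeling step --- producing a single downward-closed ideal $I$ with $\pi|_I$ a bijection onto $\mathrm{Ap}$, despite the relations among the generators that allow an Apéry element several factorizations. I would address this by selecting, compatibly and downward-closedly, a distinguished factorization for each element and then verifying that summand-closure prevents the collisions that would otherwise inflate $|I|$ beyond $m$; it is here, rather than in the reduction of the first paragraph, that I anticipate the proof's subtlety to concentrate.
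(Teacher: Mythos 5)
Your reduction is correct and is essentially identical to the paper's: the identity $c(\Lambda) - c'(\Lambda) = \frac{1}{m}\sum_r w_r - \frac{m-1}{2}$, the fact that $\max_r w_r = c(\Lambda) + m - 1$, the modeling of the Ap\'ery set by a downward-closed set of factorizations in $\mathbb{N}^{e-1}$ (the paper realizes your ``distinguished factorization'' step by taking the lexicographically minimal factorization of each Ap\'ery element, which settles exactly the closure worry you raise), and the statement of the order-ideal inequality with constant $\frac{d}{d+1}$ are all precisely the paper's Lemma 3 and its application.

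The genuine gap is your proposed proof of that inequality. The slicing induction you describe --- apply the $(d-1)$-dimensional bound to each slice $I_t$ as a black box and combine with the one-dimensional estimate in the last coordinate --- cannot be repaired by bookkeeping, because the data it retains per slice (cardinality $|I_t|$ and slice maximum $C_t$) is provably insufficient. Take $d = 3$, $b_1 = b_2 = b_3 = 1$, and the ``fin'' $I = \{(x,0,t) \in \mathbb{N}^3 : x + t \le n\}$, so $C = n$, $I_t = \{(x,0) : x \le n - t\}$, $|I_t| = n-t+1$, $C_t = n-t$. The induction hypothesis only guarantees $\sum_{\beta \in I_t}\pi'(\beta) \le \frac{2}{3}|I_t|C_t$, and combining these bounds with the exact last-coordinate contribution $t\,|I_t|$ gives
\[ \sum_{t=0}^{n}\Bigl(\tfrac{2}{3}(n-t+1)(n-t) + t(n-t+1)\Bigr) \;=\; \tfrac{7}{18}\,n(n+1)(n+2), \]
which exceeds the target $\frac{3}{4}C|I| = \frac{3}{8}\,n(n+1)(n+2)$; replacing $C_t$ by the global bound $C - t$ yields the same failed estimate. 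The deeper obstruction is that the sliced coordinate cannot be decoupled from the others at all: the per-coordinate statement $\sum_{\alpha \in I}\alpha_d b_d \le \frac{1}{d+1}C|I|$ is false for the fin (its $t$-mass is $\frac{1}{6}n(n+1)(n+2) > \frac{1}{8}n(n+1)(n+2)$), even though the full inequality holds for it with room to spare. So any correct proof must treat the coordinates jointly; the loss in your scheme is structural, not a matter of careful bookkeeping.

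For contrast, the paper's proof of Lemma 3 does exactly this coupling. For each $j$ it sets $f_j(\mathbf{x}) = x_jy_j + \pi(\mathbf{x})$, so that $\sum_{j} f_j = (d+1)\pi$; along each line in direction $j$, downward-closedness makes $S$ a segment $x = 0, \dots, D$, and the pairing of $x$ with $D - x$ gives $f_j(\mathbf{s}_x) + f_j(\mathbf{s}_{D-x}) = 2\pi(\mathbf{s}_D) \le 2C$. Summing over each line, over all lines, and then over $j$ yields $(d+1)\sum_{\mathbf{s} \in S}(C - \pi(\mathbf{s})) \ge C|S|$. Note that this argument never isolates a single coordinate's contribution, which is precisely what the fin example shows is necessary; if you want to salvage an induction on dimension, you would need a much stronger hypothesis tracking the joint distribution of cardinality, maximum, and weight-sum, at which point the symmetrization argument is both shorter and cleaner.
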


\noindent Theorem \ref{main theorem} leads to the second main result,
which establishes an asymptotic version of the Wilf conjecture.

\begin{thm} \label{asymptotic consequence}

  Fix a positive integer $k$. Then, for any $\epsilon > 0$, we find
  that

  \[ \frac{c'(\Lambda)}{c(\Lambda)} > \frac{1}{k} - \epsilon \]

  \noindent for all but finitely many numerical semigroups $\Lambda$
  satisfying $e(\Lambda) = k$.
\end{thm}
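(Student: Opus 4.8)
The plan is to deduce Theorem~\ref{asymptotic consequence} from Theorem~\ref{main theorem} by showing that the error term in the latter is negligible outside a finite list of semigroups. Writing $m = m(\Lambda)$, $c = c(\Lambda)$ and setting $e(\Lambda) = k$, Theorem~\ref{main theorem} gives $\frac{c'(\Lambda)}{c} \ge \frac{1}{k} - \frac{(m-1)(k-2)}{2kc}$. For $k \le 2$ the subtracted term is $\le 0$, so the full Wilf inequality holds with no exceptions; thus assume $k \ge 3$. It then suffices to show that the set of $\Lambda$ with $e(\Lambda) = k$ for which $\frac{(m-1)(k-2)}{2kc} \ge \epsilon$ is finite, since for every other $\Lambda$ we get $\frac{c'(\Lambda)}{c} > \frac{1}{k} - \epsilon$. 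Rearranging, this ``bad'' condition forces $c < \beta m$, where $\beta = \frac{k-2}{2k\epsilon}$ depends only on $k$ and $\epsilon$. So everything reduces to the claim that, for fixed $k$ and $\beta$, only finitely many numerical semigroups satisfy $e(\Lambda) = k$ and $c(\Lambda) < \beta\, m(\Lambda)$.

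The heart of the argument is a superlinear lower bound for the conductor in terms of the multiplicity, which I would prove using the Ap\'ery set. Let the minimal generators be $m = g_0 < g_1 < \cdots < g_{k-1}$, so that $g_j \ge m+1$ for $j \ge 1$. For each residue $r$ modulo $m$, the least element $w_r \in \Lambda$ with $w_r \equiv r \pmod m$ is a combination $\sum_{j \ge 1} a_j g_j$ of the generators other than $m$ (otherwise $w_r - m$ would lie in $\Lambda$, contradicting minimality). Let $\ell_r$ be the smallest total length $\sum_j a_j$ among all such combinations landing in class $r$, and put $L = \max_r \ell_r$. Since there are only $\binom{L+k-1}{k-1}$ tuples $(a_1, \ldots, a_{k-1})$ of total length at most $L$, and these must realize all $m$ residue classes, we obtain $m \le \binom{L+k-1}{k-1}$; in particular $L \to \infty$ as $m \to \infty$ for fixed $k$. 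Choosing a class $r^*$ with $\ell_{r^*} = L$, the corresponding Ap\'ery element satisfies $w_{r^*} \ge (m+1)L$, and since the conductor equals $\max_r w_r - m + 1$, this yields $c > (L-1)m$. Hence $c/m \to \infty$ as $m \to \infty$ with $e(\Lambda) = k$ fixed.

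Granting this bound, the claim follows quickly: a bad semigroup satisfies $c < \beta m$ together with $c > (L-1)m$, which forces $L - 1 < \beta$ and therefore bounds $m$ by a constant depending only on $k$ and $\beta$. But then $c < \beta m$ is itself bounded, and there are only finitely many numerical semigroups with bounded conductor, since each is determined by its set of gaps inside $\{1, \ldots, c-1\}$. This completes the proof.

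I expect the main obstacle to be the conductor lower bound of the second paragraph --- specifically, verifying cleanly that every residue class modulo $m$ is captured by a short combination of the non-multiplicity generators, and then extracting the growth rate $L \to \infty$ from the binomial counting estimate. The reduction in the first paragraph and the finiteness conclusion in the third are routine once this bound is in hand.
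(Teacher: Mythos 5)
Your proposal is correct, and its overall skeleton matches the paper's: both reduce Theorem~\ref{asymptotic consequence} to Theorem~\ref{main theorem} plus a finiteness statement (in the paper, Lemma~\ref{lem: m/c --> 0}: for fixed $k$ and $\epsilon$, only finitely many $\Lambda$ with $e(\Lambda)=k$ have $m(\Lambda)/c(\Lambda) > \epsilon$, which is exactly your claim that $c < \beta m$ happens only finitely often). Where you genuinely diverge is in the proof of that finiteness statement. The paper argues directly: every element of $\Lambda \cap [0, 2c]$ is $\pi(\mathbf{x})$ for a lattice point $\mathbf{x}$ with all coordinates below $2/\epsilon$, so $|\Lambda \cap [0,2c]| \le (2/\epsilon)^k$; since $[c,2c] \subset \Lambda$, this forces $c \le (2/\epsilon)^k$ outright. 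Your argument instead works through the Ap\'ery set: every residue class mod $m$ must be hit by a combination of the non-multiplicity generators, stars-and-bars gives $m \le \binom{L+k-1}{k-1}$ where $L$ is the largest minimal representation length, and the class needing length $L$ produces an Ap\'ery element $\ge (m+1)L$, whence $c > (L-1)m$ with $L \gtrsim m^{1/(k-1)}$. Both are valid; the paper's count is shorter and yields an explicit absolute bound on $c$, while yours yields the structurally stronger conclusion $c \gtrsim m^{k/(k-1)}$ for fixed $k$ (a superlinear conductor-versus-multiplicity bound), which gives finiteness for \emph{any} linear constraint $c < \beta m$ at once and makes the mechanism --- representation lengths of Ap\'ery elements --- visible. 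Your handling of $k \le 2$ (the error term in Theorem~\ref{main theorem} is nonpositive) is also a slight, harmless simplification of the paper's case analysis, and the closing step (finitely many semigroups with bounded conductor) is used identically in both proofs.
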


The remainder of the paper is organized as follows. In Section
\ref{sec: apery}, we provide some background on Ap\'ery sets and their
connection to $c(\Lambda)$ and $c'(\Lambda)$. In Section \ref{sec:
  subset bound}, we prove a lemma concerning subsets of $\bb{Z}^d$ in
preparation for the proofs of the main results in Section \ref{sec:
  main theorems}. Finally, in Section \ref{sec: conclusions} we
discuss our approach and pose some questions for future
investigation.

\section{Ap\'ery sets} \label{sec: apery}

The first step in proving our main results is to compute $c'(\Lambda)$
in terms of the Ap\'ery set (as seen in, for example, Chapter 1,
Section 2 of \cite{RosalesSanchez}\footnote{The authors define more
  generally the Ap\'ery set \emph{of $n$} in $\Lambda$, for any
  positive integer $n$. For our purposes, we will always assume $n =
  m$.}). For a numerical semigroup $\Lambda$ with multiplicity $m$,
the \emph{Ap\'ery set} of $\Lambda$ is defined to be the set $\{ a_0,
a_1, \ldots , a_{m - 1} \}$, where $a_i$ is the smallest element in
$\Lambda$ congruent to $i$ modulo $m$. We will denote the Ap\'ery set
of $\Lambda$ by $A(\Lambda)$.

Since $m \in \Lambda$, for each $i$, the set of elements of $\Lambda$
congruent to $i$ modulo $m$ is exactly $\{ a_i, a_i + m, a_i + 2m ,
\ldots \}$. Note that this gives an alternate characterization of
$A(\Lambda)$ as the set $\{ a \in \Lambda \mid a - m \not \in \Lambda
\}$.

By separately examining the elements of $\Lambda$ congruent to each
residue modulo $m$, we may compute $c'(\Lambda)$ in terms of
$A(\Lambda)$.

\begin{lem} \label{lem: c' formula} 

  \[ c'(\Lambda) = \frac{m - 1}{2} + \sum_{i = 0}^{m - 1}
  \frac{c(\Lambda) - a_i}{m}. \]

\end{lem}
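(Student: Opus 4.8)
The plan is to count the elements of $\Lambda$ below $c(\Lambda)$ one residue class at a time, as the surrounding discussion suggests, and then to recognize the accumulated rounding errors as a complete residue system modulo $m$. Write $c = c(\Lambda)$. Fix a residue $i$; as already noted, the elements of $\Lambda$ congruent to $i$ modulo $m$ are exactly $a_i, a_i + m, a_i + 2m, \ldots$, so those that are less than $c$ are the $a_i + jm$ with $j \ge 0$ and $a_i + jm < c$. The number of such $j$ is the number of nonnegative integers strictly below $\frac{c - a_i}{m}$. First I would verify that $\frac{c - a_i}{m} > -1$ for every $i$: since $a_i - m$ is either negative or an element of $\bb{N} \setminus \Lambda$, it is at most the Frobenius number $c - 1$, giving $a_i \le c - 1 + m$ and hence $c - a_i \ge 1 - m$. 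With this bound, the count of nonnegative integers below $\frac{c - a_i}{m}$ is exactly $\left\lceil \frac{c - a_i}{m} \right\rceil$, and summing over all residues yields
\[ c'(\Lambda) = \sum_{i=0}^{m-1} \left\lceil \frac{c - a_i}{m} \right\rceil. \]

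The remaining task is to separate each ceiling into its exact value plus a rounding term. Writing $\delta_i = \left\lceil \frac{c - a_i}{m} \right\rceil - \frac{c - a_i}{m}$, I would observe that $m\delta_i$ is precisely the amount by which the integer $c - a_i$ must be increased to reach the next multiple of $m$; that is, $m\delta_i$ is the representative of $a_i - c$ in $\{0, 1, \ldots, m-1\}$. Since $a_i \equiv i \pmod{m}$ while $c$ has a fixed residue modulo $m$, the quantity $a_i - c$ runs over a complete residue system modulo $m$ as $i$ ranges over $0, 1, \ldots, m-1$. Hence the multiset $\{ m\delta_i \}$ equals $\{0, 1, \ldots, m-1\}$, so that $\sum_{i=0}^{m-1} m\delta_i = \frac{m(m-1)}{2}$ and therefore $\sum_{i=0}^{m-1} \delta_i = \frac{m-1}{2}$.

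Putting these together gives
\[ c'(\Lambda) = \sum_{i=0}^{m-1} \frac{c - a_i}{m} + \sum_{i=0}^{m-1} \delta_i = \frac{m-1}{2} + \sum_{i=0}^{m-1} \frac{c(\Lambda) - a_i}{m}, \]
which is the claimed formula. The only genuine subtlety, and the step I would treat most carefully, is the boundary behavior at the residue class attaining the maximal $a_i$: there $a_i$ may exceed $c$, so that the naive ceiling count could fail, and it is exactly the inequality $c - a_i \ge 1 - m$ that keeps the ceiling formula valid (forcing the count to be $0$ rather than a negative number). Everything else is a routine bijection-of-residues computation, and I would sanity-check the final identity against the equality family $\{k, nk+1, \ldots, nk+k-1\}$, where it correctly returns $c'(\Lambda) = n$.
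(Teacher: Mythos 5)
Your proof is correct and takes essentially the same approach as the paper's: your rounding terms $m\delta_i$ are exactly the paper's $\sigma(i)$ (the representative of $i - c(\Lambda)$ modulo $m$ in $\{0, 1, \ldots, m-1\}$), and both arguments conclude by recognizing that these correction terms form a permutation of $0, 1, \ldots, m-1$ and hence sum to $\frac{m(m-1)}{2}$. The only differences are notational (ceilings versus explicit representatives) and your explicit treatment of the boundary case $a_i \ge c(\Lambda)$, which the paper handles implicitly.
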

\begin{proof}
  For each $i$, let $\sigma(i)$ be the (unique) element between $0$
  and $m - 1$ inclusive which is congruent to $i - c(\lambda)$ modulo
  $m$. The elements of $\Lambda$ congruent to $i$ modulo $m$ and less
  than $c(\Lambda)$ are

  \[ \left\{ a_i, a_i + m, \ldots , c(\Lambda) + \sigma(i) - m
  \right\}. \]

  \noindent This last element is equal to $a_i + \left(
    \frac{c(\Lambda) + \sigma(i) - a_i}{m} - 1 \right)m$. Thus, there
  are $\frac{c(\Lambda) + \sigma(i) - a_i}{m}$ elements of $\Lambda$
  less than $c(\Lambda)$ congruent to $i$ modulo $m$. Summing over all
  $i$ and noting that $c'(\Lambda)$ is the total number of elements of
  $\Lambda$ less than $c(\Lambda)$, we obtain 

  \[ c'(\Lambda) = \sum_{i = 0}^{m - 1} \frac{c(\Lambda) + \sigma(i) -
    a_i}{m}. \]  

  \noindent Finally, note that $\sigma(0), \sigma(1), \ldots ,
  \sigma(m - 1)$ is a permutation of $0, 1, \ldots , m - 1$, so we have

  \[ \sum_{i = 0}^{m - 1} \frac{\sigma(i)}{m} = \sum_{i = 0}^{m - 1}
  \frac{i}{m} = \frac{m - 1}{2}. \]

  \noindent Plugging this in yields the desired equation.
\end{proof}

\noindent We record one additional lemma, which is trivial but useful
to keep in mind.

\begin{lem} \label{lem: a <= c + m - 1}
  For each $a \in A(\Lambda)$, $a \le c(\Lambda) + m - 1$.
\end{lem}
\begin{proof}
  If $a \in A(\Lambda)$, then $a - m \not \in \Lambda$. By the
  definition of $c(\Lambda)$, this implies that $a - m \le c(\Lambda)
  - 1$. Rearranging yields the result.
\end{proof}

We can treat Lemma \ref{lem: a <= c + m - 1} as a constraint on the
elements of $A(\Lambda)$, and our task is to give a lower bound on
$c'(\Lambda)$, as given by Lemma \ref{lem: c' formula}. Of course, the
set $A(\Lambda)$ is far from arbitrary, and to obtain a good bound, we
must use some of its properties relating to the semigroup structure of
$\Lambda$. 

Suppose that $\Lambda$ has minimal generators $m = g_1, g_2, \ldots ,
g_k$. Then, each element $a \in A(\Lambda)$ can be written as $\sum_{i
  = 1}^k r_ig_i$ for some (not necessarily unique) non-negative
integers $r_i$. Furthermore, since $a - m \not \in \Lambda$, it
follows that $r_1 = 0$. Hence, each element of $A(\Lambda)$ can be
considered as a point $(r_2, \ldots , r_k) \in \bb{Z}^{k - 1}$. In the
next section, we prove an inequality on subsets of $\bb{Z}^d$, which
will then be applied to $A(\Lambda)$ to prove Theorem \ref{main
  theorem}.

\section{A bound on subsets of $\bb{Z}^d$} \label{sec: subset bound}

Let $O^d \subset \bb{Z}^d$ be the set of $d$-tuples of non-negative
integers. Then, we have the following lemma.

\begin{lem} \label{inequality} Let $y_1, y_2, \ldots, y_d$, and $C$ be
  positive reals. Define a function $\pi(x_1, \ldots , x_d) = \sum_{i
    = 1}^d x_iy_i$. Suppose that $S \subset O^d$ is a (finite) set
  such that $\pi({\bf s}) \le C$ for all ${\bf s} \in S$.

  Furthermore, suppose that $O^d + (O^d \setminus S) \subset (O^d
  \setminus S)$, where the sum $A + B$ of two sets $A, B \subset
  \bb{Z}^d$ is defined to be the set $\{ a + b \mid a \in A, b \in B
  \}$.

  Then, the following inequality holds:

  \[ (d + 1) \sum_{{\bf s} \in S} (C - \pi({\bf s})) \ge C |S|. \]
\end{lem}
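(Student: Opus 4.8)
The plan is to induct on the dimension $d$, after first unwinding the hypothesis on $S$. The condition $O^d + (O^d \setminus S) \subset O^d \setminus S$ says precisely that $S$ is \emph{downward-closed}: if $\mathbf{s} \in S$ and $\mathbf{t} \in O^d$ satisfies $\mathbf{t} \le \mathbf{s}$ coordinatewise, then $\mathbf{t} \in S$ (otherwise $\mathbf{t} \in O^d \setminus S$ and $\mathbf{s} = \mathbf{t} + (\mathbf{s} - \mathbf{t})$ would lie in $O^d \setminus S$). It is convenient to rewrite the target: setting $Q(S) = (d+1)\sum_{\mathbf{s} \in S}(C - \pi(\mathbf{s})) - C|S|$ and using $\sum_{\mathbf{s}\in S}(C - \pi(\mathbf{s})) = C|S| - \sum_{\mathbf{s} \in S}\pi(\mathbf{s})$, the claim $Q(S) \ge 0$ is equivalent to $(d+1)\sum_{\mathbf{s}\in S}\pi(\mathbf{s}) \le dC|S|$; that is, the average of $\pi$ over $S$ is at most $\tfrac{d}{d+1}C$.

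For the base case $d = 1$, a downward-closed finite $S$ is $\{0, 1, \ldots, n-1\}$, the constraint $\pi(\mathbf{s}) \le C$ reads $(n-1)y_1 \le C$, and a direct evaluation of $\sum_{s=0}^{n-1}(C - sy_1)$ reduces $Q(S) \ge 0$ to exactly $C \ge (n-1)y_1$. For the inductive step I fix $d \ge 2$, assume the lemma in dimension $d - 1$, and slice. For each coordinate direction $i \in \{1, \ldots, d\}$ and each height $j \ge 0$, let $S^{(i)}_j$ be the image in $O^{d-1}$ of $\{\mathbf{s} \in S : s_i = j\}$ under deleting the $i$-th coordinate. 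Downward-closedness of $S$ makes each $S^{(i)}_j$ downward-closed in $O^{d-1}$, and on it the restriction of $\pi$ to the remaining coordinates is bounded by $C - jy_i$. Applying the inductive hypothesis with this constant to each slice (those with $C - jy_i \le 0$ being empty or the single point $\mathbf{0}$) and summing over $j$ yields, after writing $\sum_j j\,|S^{(i)}_j| = \sum_{\mathbf{s}\in S} s_i$, the bound
\[ Q(S) \ge \tfrac{1}{d}\Big( C|S| - (d+1)\, y_i \sum_{\mathbf{s} \in S} s_i \Big). \]

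The decisive step, and the point where one slice does not suffice, is to sum these $d$ inequalities over $i = 1, \ldots, d$ rather than use any single one. Since $\sum_{i=1}^d y_i \sum_{\mathbf{s}\in S} s_i = \sum_{\mathbf{s}\in S}\pi(\mathbf{s})$, the right-hand sides collapse and the sum becomes
\[ d\,Q(S) \ge \tfrac{1}{d}\Big( dC|S| - (d+1)\sum_{\mathbf{s}\in S}\pi(\mathbf{s}) \Big) = \tfrac{1}{d}\,Q(S). \]
Thus $d\,Q(S) \ge \tfrac{1}{d}Q(S)$, i.e.\ $(d - \tfrac{1}{d})Q(S) \ge 0$, and since $d \ge 2$ this forces $Q(S) \ge 0$, completing the induction.

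I expect the main obstacle to be finding the right combination of slices. Slicing along one coordinate and invoking the inductive hypothesis is the natural move, but the resulting bound $\tfrac{1}{d}(C|S| - (d+1)y_i\sum_{\mathbf{s}} s_i)$ can be negative — for instance when $S$ is a long segment along the $i$-th axis — so no single direction settles the claim. The key observation is that summing over all $d$ directions produces exactly $\tfrac{1}{d}Q(S)$ on the right, a fixed fraction of the very quantity being bounded, which is therefore self-improving. A secondary point is the clean bookkeeping for degenerate slices where $C - jy_i$ is zero or negative, and checking that the inductive hypothesis (stated for positive $C$) still applies to them, which it does since such slices are empty or reduce to $\{\mathbf{0}\}$.
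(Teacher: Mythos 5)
Your proof is correct, but it takes a genuinely different route from the paper's. The paper argues directly, with no induction: for each coordinate direction $j$, it decomposes $S$ into one-dimensional fibers along that direction, each of which by downward-closedness is a segment $\{{\bf s}_0, {\bf s}_1, \ldots , {\bf s}_D\}$; pairing ${\bf s}_x$ with ${\bf s}_{D - x}$ and using $\pi({\bf s}_D) \le C$ gives the per-direction bound $\sum_{{\bf s} \in S} \left( s_j y_j + \pi({\bf s}) \right) \le C|S|$, and summing these $d$ bounds over $j$, via $\sum_{j} \left( s_j y_j + \pi({\bf s}) \right) = (d+1)\pi({\bf s})$, yields the lemma in one stroke. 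You instead replace the fiber-wise pairing by induction on $d$: your base case $d = 1$ is essentially the paper's pairing computation, and your inductive step slices perpendicular to each direction $i$ to get $Q(S) \ge \tfrac{1}{d}\left( C|S| - (d+1) y_i \sum_{{\bf s} \in S} s_i \right)$. I verified the algebra here, as well as the collapse $\sum_{i} y_i \sum_{{\bf s} \in S} s_i = \sum_{{\bf s} \in S} \pi({\bf s})$, and your handling of slices with $C - j y_i \le 0$ (empty or $\{ {\bf 0} \}$) is the right way to keep the induction hypothesis applicable. The structural difference is in how the $d$ per-direction inequalities combine: the paper's bounds sum directly to the conclusion, whereas your slice bounds are individually useless (each can be negative, as you note) and only close the loop through the self-improving inequality $d\, Q(S) \ge \tfrac{1}{d} Q(S)$, which forces $Q(S) \ge 0$ precisely when $d \ge 2$ --- explaining why the base case must be done separately. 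The paper's route is shorter and avoids bookkeeping for degenerate slices; yours isolates the one-dimensional computation as the true base case and shows the inequality propagates up through dimensions, which is a worthwhile alternative perspective even though both proofs ultimately rest on averaging over all $d$ coordinate directions.
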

\begin{proof} For each $j$, define the map $f_j: \bb{Z}^d \to \bb{R}$
  by $(x_1, \ldots , x_d) \mapsto x_jy_j + \pi(x_1, \ldots ,
  x_d)$. Next, fix a value of $j$, and fix values $x_i'$ for $i \ne
  j$. Define ${\bf s}_x = (x'_1, \ldots , x'_{j-1}, x, x'_{j+1},
  \ldots , x_d)$. Also, let $D$ be the largest integer such that ${\bf
    s}_D \in S$. 

  Note that for any non-negative $x \le D$, if ${\bf s}_x \in O^d
  \setminus S$, then since ${\bf s}_D - {\bf s}_x \in O^d$, it follows
  from the hypothesis $O^d + (O^d \setminus S) \subset (O^d \setminus
  S)$ that ${\bf s}_D \in O^d \setminus S$, a contradiction. Hence,
  ${\bf s}_x \in S$ for all non-negative $x \le D$. Summing these
  under $f_j$ and noting that $f_j({\bf s}_x) + f_j({\bf s}_{D - x}) =
  \pi({\bf s}_{2x}) + \pi({\bf s}_{2D - 2x}) = 2 \pi({\bf s}_D) \le
  2C$, we have

  \[ \sum_{x = 0}^D f_j({\bf s}_x) = \frac{1}{2} \sum_{x = 0}^D
  (f_j({\bf s}_x) + f_j({\bf s}_{D - x})) \]
  \[ \le \frac{1}{2} \sum_{x = 0}^D 2C = \sum_{x = 0}^D C. \]

  \noindent Summing over all choices of the $x'_i$, we find that
  
  \[ \sum_{{\bf s} \in S} f_j({\bf s}) \le \sum_{{\bf s} \in S} C. \]

  \noindent We can then sum this over all $j$, and noting that
  $\sum_{j = 1}^d f_j({\bf s}) = (d + 1) \pi({\bf s})$, we obtain

  \[ \sum_{{\bf s} \in S} (d + 1) \pi({\bf s}) \le d \sum_{{\bf s} \in
    S} C \]
  \[ C |S| = \sum_{{\bf s} \in S} C \le (d + 1) \sum_{{\bf s} \in S}
  (C - \pi({\bf s})), \]

  \noindent as desired.
\end{proof}

\section{Proofs of the main theorems } \label{sec: main theorems}

To prove Theorem \ref{main theorem}, we will translate elements of
$\Lambda' = \{ \lambda \in \Lambda \mid \lambda < c(\Lambda) \}$ into
a set $S$ as in Lemma \ref{inequality}.

We use the notation of Section \ref{sec: apery}, where $m$ is the
multiplicity of $\Lambda$, $a_i$ denotes the smallest element of
$\Lambda$ congruent to $i$ modulo $m$, and $A(\Lambda) = \{ a_0,
\ldots , a_{m - 1} \}$.

\begin{proof}[Proof of Theorem \ref{main theorem}]
  Let $k = e(\Lambda)$, and let $m = g_1 < g_2 < \cdots < g_k$ be the
  minimal generators of $\Lambda$.

  We will apply Lemma \ref{inequality} with $d = k - 1$ and $y_1 =
  g_2, y_2 = g_3, \ldots , y_{k - 1} = g_k$. Thus, using the notation
  of Lemma \ref{inequality}, $\pi(x_1, \ldots , x_{k - 1}) = \sum_{i =
    1}^{k - 1} x_ig_{i + 1}$. Now, recall that any element $a \in
  A(\Lambda)$ can be expressed as a sum $\sum_{i = 2}^k r_ig_i$, where
  $r_i \ge 0$ for all $i$. In other words, there exists some ${\bf x}
  \in O^{k - 1}$ such that $\pi({\bf x}) = a$.

  We are interested in a particular choice of such ${\bf x}$. We
  define a lexicographical ordering on $O^{k - 1}$ by considering the
  $k - 1$ coordinates as a sequence of $k - 1$ numbers, and we say an
  element ${\bf x} \in O^{k - 1}$ is \emph{lexicographically minimal}
  if it comes lexicographically before any other element ${\bf y} \in
  O^{k - 1}$ for which $\pi({\bf x}) = \pi({\bf y})$.
  
  For each $i$, define ${\bf s}_i$ to be the lexicographically minimal
  element of $O^{k - 1}$ such that $\pi({\bf s}_i) = a_i$. We claim
  that $S = \{ {\bf s}_0, {\bf s}_1, \ldots , {\bf s}_{m - 1} \}$
  satisfies the hypotheses of Lemma \ref{inequality} with $C =
  c(\Lambda) + m - 1$. By Lemma \ref{lem: a <= c + m - 1}, we have
  $\pi(s_i) = a_i \le c(\Lambda) + m - 1 = C$, so the first condition
  is satisfied. To verify the second condition, consider any ${\bf x}
  \in O^{k - 1} \setminus S$ and ${\bf x}' \in O^{k - 1}$. We claim
  that ${\bf x} + {\bf x}' \not \in S$.

  Since ${\bf x} \not \in S$, either $\pi({\bf x}) \not \in
  A(\Lambda)$, or ${\bf x}$ is not lexicographically minimal. In the
  case that $\pi({\bf x}) \not \in A(\Lambda)$, recall that
  $A(\Lambda) = \{ a \in \Lambda \mid a - m \not \in \Lambda \}$. We
  thus know that $\pi({\bf x}) - m \in \Lambda$. In addition,
  $\pi({\bf x}')$ is a non-negative integer combination of $g_2,
  \ldots , g_k$, so $\pi({\bf x}') \in \Lambda$. Therefore, $\pi({\bf
    x} + {\bf x}') - m = \left( \pi({\bf x}) - m \right) + \pi({\bf
    x}') \in \Lambda$. Hence, we find that $\pi({\bf x} + {\bf x}')
  \not \in A(\Lambda)$, and so ${\bf x} + {\bf x}' \not \in S$.

  In the case that ${\bf x}$ is not lexicographically minimal, let
  ${\bf x}_0 \in O^k$ be an element which comes lexicographically
  before ${\bf x}$ such that $\pi({\bf x}_0) = \pi({\bf x})$. Then,
  ${\bf x}_0 + {\bf x}'$ comes lexicographically before ${\bf x} +
  {\bf x}'$, while $\pi({\bf x}_0 + {\bf x}') = \pi({\bf x} + {\bf
    x}')$. It follows that ${\bf x} + {\bf x}'$ is also not
  lexicographically minimal, so ${\bf x} + {\bf x}' \not \in S$.

  This establishes our claim, and the hypotheses of Lemma
  \ref{inequality} are satisfied. Setting $C = c(\Lambda) + m - 1$, we
  obtain

  \[ Cm = C|S| \le k \sum_{{\bf s} \in S} (C - \pi({\bf s})) \]
  \[ \frac{C}{k} \le \frac{1}{m} \sum_{i = 0}^{m - 1} (C - \pi({\bf
    s}_i)). \]

  \noindent We can apply this inequality to the formula in Lemma
  \ref{lem: c' formula} to find that

  \[ c'(\Lambda) = \frac{m - 1}{2} + \sum_{i = 0}^{m - 1}
  \frac{c(\Lambda) - a_i}{m} \]
  \[ = \frac{m - 1}{2} + \sum_{i = 0}^{m - 1} \frac{C - m + 1 -
    \pi({\bf s}_i)}{m} \]
  \[ \ge \frac{C}{k} + \frac{m - 1}{2} - \sum_{i = 0}^{m - 1} \frac{m
    - 1}{m} \]
  \[ = \frac{C}{k} - \frac{m - 1}{2} \]
  \[ = \frac{c(\Lambda)}{k} - \frac{(m - 1)(k - 2)}{2k}, \]

  \noindent which proves the theorem upon substituting $e(\Lambda) =
  k$.
\end{proof}

The inequality of Theorem \ref{main theorem} is weaker than the Wilf
conjecture, but it implies the asymptotic result of Theorem
\ref{asymptotic consequence}, because in some sense, $m(\Lambda)$ is
much smaller than $c(\Lambda)$ for ``most'' numerical semigroups
$\Lambda$. We make this precise below.

\begin{lem} \label{lem: m/c --> 0} 

  For any $\epsilon > 0$ and fixed positive integer $k$, there are
  only finitely many numerical semigroups $\Lambda$ such that
  $e(\Lambda) = k$, and $\frac{m(\Lambda)}{c(\Lambda)} > \epsilon$.

\end{lem}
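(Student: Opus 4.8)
The plan is to show that the two hypotheses $e(\Lambda) = k$ and $m(\Lambda)/c(\Lambda) > \epsilon$ together force the multiplicity $m$, and hence the conductor $c$, to be bounded above by a constant depending only on $\epsilon$ and $k$. Once $c$ is bounded by some $C_0 = C_0(\epsilon, k)$, finiteness is immediate: a numerical semigroup with conductor at most $C_0$ is completely determined by which of the integers $1, 2, \ldots, C_0 - 1$ it omits, so there are at most $2^{C_0 - 1}$ such semigroups in total. Thus the whole problem reduces to extracting a uniform upper bound on $m$.

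First I would record the elementary observation that $m \le c$ for every numerical semigroup, which holds because $m - 1$ is a gap of $\Lambda$ and therefore $m - 1 \le c - 1$. Combined with the hypothesis $m/c > \epsilon$, this gives $m \le c < m/\epsilon$, so that $c$ is controlled as soon as $m$ is; it remains only to bound $m$. The heart of the argument is then a counting estimate on the Ap\'ery set, using the notation of Section~\ref{sec: apery}. Write each Ap\'ery element as $a_i = \sum_{j = 2}^k r_{ij} g_j$ with $r_{ij} \ge 0$ (recall that the $g_1 = m$ term necessarily vanishes), and set $d_i = \sum_{j = 2}^k r_{ij}$, the number of generators used. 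Since every generator $g_j$ with $j \ge 2$ satisfies $g_j > m$, we obtain $a_i \ge d_i m$, while Lemma~\ref{lem: a <= c + m - 1} gives $a_i \le c + m - 1$. Dividing yields $d_i \le (c + m - 1)/m < 1/\epsilon + 1$, so each $d_i$ is at most the constant $T := \lceil 1/\epsilon \rceil$.

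The payoff is that every Ap\'ery element is a sum of at most $T$ of the $k - 1$ generators $g_2, \ldots, g_k$, and the number of multisets of size at most $T$ drawn from $k - 1$ elements is only $\binom{T + k - 1}{k - 1}$. Assigning to each residue class $i$ a chosen multiset representing $a_i$ gives an injective map, because distinct residue classes have distinct Ap\'ery values and a multiset determines its own sum. Hence $m = |A(\Lambda)| \le \binom{T + k - 1}{k - 1}$, a bound depending only on $\epsilon$ and $k$, and then $c < m/\epsilon \le \epsilon^{-1}\binom{T + k - 1}{k - 1}$ is likewise bounded, completing the proof.

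I expect the only delicate point to be the inequality $a_i \ge d_i m$ together with the claim that the assignment of residue classes to representing multisets is genuinely injective; this is precisely where both the strict inequality $g_j > m$ (to bound the degree $d_i$) and the bound $m \le c$ (to turn a bound on $m$ into a bound on $c$) are used, so I would state these two facts carefully before running the count.
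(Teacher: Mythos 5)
Your proof is correct, and it takes a genuinely different route from the paper's. The paper bounds the conductor directly: under $m > \epsilon c$, any coordinate $x_i \ge 2/\epsilon$ forces $\pi(\mathbf{x}) \ge x_i g_i \ge 2m/\epsilon > 2c$, so at most $(2/\epsilon)^k$ points of $O^k$ can map into $[0, 2c]$; since $\pi(O^k) \supset \Lambda$ and $[c, 2c] \subset \Lambda$ gives $|\Lambda \cap [0, 2c]| \ge c$, this yields $c \le 2^k/\epsilon^k$. You instead bound the multiplicity first: writing each Ap\'ery element as a sum of $d_i$ generators, each of size at least $m$, and invoking Lemma \ref{lem: a <= c + m - 1} to get $d_i m \le a_i \le c + m - 1$, the hypothesis $c < m/\epsilon$ gives $d_i < 1/\epsilon + 1$, hence $d_i \le \lceil 1/\epsilon \rceil$; your injection from residue classes to representing multisets (valid because the $a_i$ are pairwise distinct) then gives $m \le \binom{\lceil 1/\epsilon \rceil + k - 1}{k - 1}$, and $c < m/\epsilon$ transfers the bound to the conductor. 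Both arguments run on the same engine --- when $m > \epsilon c$, lattice-point representations of small elements of $\Lambda$ have coordinates bounded in terms of $\epsilon$ --- but your counting object is the Ap\'ery set (of size exactly $m$) rather than $\Lambda \cap [0, 2c]$ (of size at least $c$), and you rely on Lemma \ref{lem: a <= c + m - 1}, which the paper's own proof of this lemma never uses. Your route meshes more tightly with the Ap\'ery machinery of Section \ref{sec: apery} and gives a somewhat sharper numerical bound, roughly $\epsilon^{-k}/(k-1)!$ in place of $2^k\epsilon^{-k}$, while the paper's route is self-contained given only the generators. Two minor observations: strictness in $g_j > m$ is not actually needed (the inequality $a_i \ge d_i m$ only requires $g_j \ge m$), and the preliminary fact $m \le c$ plays no real role, since the passage from a bound on $m$ to a bound on $c$ is carried entirely by the hypothesis $c < m/\epsilon$.
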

\begin{proof}
  Let $g_1, \ldots , g_k$ be the minimal generators of $\Lambda$, and
  define $\pi({\bf x}) = \sum_{i = 1}^k x_ig_i$ for ${\bf x} = (x_1,
  \ldots , x_k) \in O^k$ (note that this differs slightly from the
  definition of $\pi$ used in proving Theorem \ref{main theorem},
  which took the sum of $x_ig_{i + 1}$). Suppose that
  $\frac{m(\Lambda)}{c(\Lambda)} > \epsilon$. We will show that
  $c(\Lambda) \le \frac{2^k}{\epsilon^k}$.

  For any ${\bf x} = (x_1, \ldots , x_k) \in O^k$, if $x_i \ge
  \frac{2}{\epsilon}$ for any $i$, then

  \[ \pi({\bf x}) \ge x_ig_i \ge \frac{2m}{\epsilon} > 2c(\Lambda). \]

  \noindent Therefore, there are at most $\frac{2^k}{\epsilon^k}$
  possible values of ${\bf x} \in O^k$ for which $\pi({\bf x}) \le
  2c(\Lambda)$.

  Any element $\lambda \in \Lambda$ can be expressed as a sum $\lambda
  = \sum_{i = 1}^k r_ig_i$, where the $r_i$ are non-negative
  integers. Letting ${\bf r} = (r_1, \ldots , r_k)$, we find that
  $\pi({\bf r}) = \lambda$. Thus, $\pi(O^k) \supset \Lambda$. This
  implies that

  \[ | \Lambda \cap [0, 2c(\Lambda)] | \le | \pi^{-1}( \Lambda \cap
  [0, 2c(\Lambda)] ) | \le \frac{2^k}{\epsilon^k}. \]

  However, by the definition of $c(\Lambda)$, we know that $\Lambda$
  contains all the numbers between $c(\Lambda)$ and $2c(\Lambda)$, and
  so $| \Lambda \cap [0, 2c(\Lambda)] | \ge c(\Lambda)$. Therefore,
  $c(\Lambda) \le \frac{2^k}{\epsilon^k}$, meaning that there are
  finitely many possible values of $c(\Lambda)$.

  For each possible value of $c(\Lambda)$, it is clear that there are
  only finitely many possibilities for $\Lambda$. Thus, only finitely
  many $\Lambda$ satisfy the specified conditions.
\end{proof}

\noindent Theorem \ref{asymptotic consequence} is then an immediate
consequence of Lemma \ref{lem: m/c --> 0} and Theorem \ref{main
  theorem}.

\begin{proof}[Proof of Theorem \ref{asymptotic consequence}.]

  Let $\Lambda$ be a numerical semigroup satisfying $e(\Lambda) =
  k$. Note that if $k = 1$, then in order for $\Lambda$ to be
  cofinite, $\Lambda$ must consist of all the non-negative
  integers. Recall also that if $k = 2$, then
  $\frac{c'(\Lambda)}{c(\Lambda)} = \frac{1}{k}$. Thus, Theorem
  \ref{asymptotic consequence} clearly holds in these two cases, and
  we may henceforth assume that $k > 2$.

  Take $\epsilon' = \frac{2k \epsilon}{k - 2}$. Then, according to
  Lemma \ref{lem: m/c --> 0}, by excluding only finitely many
  numerical semigroups, we may assume that
  $\frac{m(\Lambda)}{c(\Lambda)} \le \epsilon'$. Then, by Theorem
  \ref{main theorem},

  \[ c'(\Lambda) \ge \frac{c(\Lambda)}{k} - \frac{(m(\Lambda) - 1)(k -
    2)}{2k}, \]

  \noindent which rearranges to
  
  \[ \frac{c'(\Lambda)}{c(\Lambda)} \ge \frac{1}{k} - \frac{m(\Lambda)
    - 1}{c(\Lambda)} \cdot \frac{k - 2}{2k} \]
  \[ > \frac{1}{k} - \frac{m(\lambda)}{c(\Lambda)} \cdot \frac{\epsilon}{\epsilon'} \]
  \[ \ge \frac{1}{k} - \epsilon. \]
\end{proof}

\section{Concluding remarks} \label{sec: conclusions}

Having proven an asymptotic version of the Wilf conjecture, it is
natural to ask whether the techniques used may be adapted to prove the
inequality exactly. Recalling the proof of Theorem \ref{main theorem},
the only place where an inequality is established is through the
application of Lemma \ref{inequality}. 

An astute reader may have noticed, however, that equality in Lemma
\ref{inequality} can occur. For example, if $y_1 = y_2 = \cdots = y_d
= 1$ and $C$ is any positive integer, then it is not hard to check
that $S = \{ (x_1, \ldots , x_d) \in O^d \mid x_1 + \cdots + x_d \le C
\}$ gives equality. In fact, up to scaling of the $y_i$ and $C$, it is
fairly straightforward to show that this is the only equality case.

Thus, any direct approach towards strengthening Theorem \ref{main
  theorem} would likely focus on how the application of Lemma
\ref{inequality} in the proof deviates from the equality case. Recall
that in the proof of Theorem \ref{main theorem}, the $y_i$ were taken
to be the minimal generators of $\Lambda$, excluding $m$. Since the
minimal generators are distinct, it is certainly impossible to exactly
achieve the equality case described above, where all the $y_i$ are
equal.

A sort of approximation to the equality case of Lemma \ref{inequality}
occurs when the minimal generators excluding $m$ are very close to
each other; this corresponds to the equality case of the Wilf
conjecture in which the minimal generators of $\Lambda$ are $m$, $nm +
1, nm + 2, \ldots , nm + m - 1$, for any $n > 1$. It would be
interesting to investigate the deviation from equality more precisely.

It is also natural to ask whether the asymptotic results we have
obtained are, in some sense, sharp. Note that for any fixed embedding
dimension $k$, the equality case just described actually gives an
infinite family of equality cases if we take $m = k$. However, the
requirement $m = k$ is rather restrictive, because it implies that
every element of the Ap\'ery set is also a minimal generator. One
might ask whether larger families of equality or near-equality cases
exist. For any $M$ and $k$, let $F(M, k)$ denote the infimum of
$\frac{c'(\Lambda)}{c(\Lambda)}$ over all numerical semigroups
$\Lambda$ satisfying $e(\Lambda) = k$ and $m(\Lambda) > M$. Then, we
ask the following question.

\begin{ques}
For each $k$, what is the value of

\[ \lim_{M \rightarrow \infty} F(M, k), \]

\noindent and in particular, is it equal to $\frac{1}{k}$?
\end{ques}

Although there has been considerable recent attention on the study of
numerical semigroups according to their genus or Frobenius number
(e.g. \cite{Kaplan}, \cite{BlancoSanchezPuerto}), there have been few
prior results relating these quantities to the embedding dimension
except in certain special cases. We hope that this work will enable a
better understanding of how embedding dimension fits into the
picture. In addition, we believe that the results of this paper
provide further evidence that the Wilf conjecture is true and hope
that they may be strengthened to prove the conjecture in full.

\section{Acknowledgements} \label{sec: acknowledgements}

The author is indebted to Ricky Liu for helpful discussions and the
idea for proving Lemma \ref{inequality}, and the author gratefully
acknowledges Nathan Kaplan for suggestions to improve the exposition
and organization of the paper.

This work was done as part of the University of Minnesota Duluth REU
program, funded by the National Science Foundation (Award DMS 0754106)
and the National Security Agency (Award H98230-06-1-0013). The author
would also like to thank Joseph Gallian for organizing the program,
proofreading the paper, and providing various forms of support along
the way. Finally, the author would like to thank Maria Monks and
Nathan Pflueger for advising and feedback on oral presentations of the
material here that the author gave before the time of writing.


\begin{thebibliography}{10}

\bibitem{BlancoSanchezPuerto} V. Blanco, P. A. Garc\'ia-S\'anchez, and
  J. Puerto, \textit{Computing the number of numerical semigroups
    using generating functions}, arXiv:0901.1228v3 (2009).
\bibitem{BrasAmoros} M. Bras-Amor\'os, \textit{Fibonacci-like behavior
    of the number of numerical semigroups of a given genus}, Semigroup
  Forum \textbf{76} (2008), no. 2, 379-384.
\bibitem{DobbsMatthews} D. Dobbs and G. Matthews, \textit{On a
    question of Wilf concerning numerical semigroups}, Focus on
  commutative rings research, 193-202, New York: Nova Science
  Publishers, 2006.
\bibitem{FrobergGottliebHaggkvist} R. Fr\"oberg, C. Gottlieb, and
  R. H\"aggkvist, \textit{On numerical semigroups}, Semigroup Forum
  \textbf{35} (1987), no. 1, 63-83.
\bibitem{Kaplan} N. Kaplan, \textit{Counting numerical semigroups by
    genus and some cases of a question of Wilf}, preprint.
\bibitem{RosalesSanchez} J. C. Rosales and P. A. Garc\'ia-S\'anchez,
  \textit{Numerical Semigroups}, New York: Springer, 2009.
\bibitem{Wilf} H. Wilf, \textit{A circle-of-lights algorithm for the
    ``money-changing problem''}, American Mathematical Monthly
  \textbf{85} (1978), no. 7, 562-565.

\end{thebibliography}
\end{document}